\theoremstyle{plain}
\newtheorem{lem}{Lemma}[section]
\theoremstyle{definition}
\newcommand{\bbz}{\mathbb{Z}}
\renewcommand{\geq}{\geqslant}
\renewcommand{\leq}{\leqslant}
\newtheorem*{mainthm*}{Main Theorem}
\begin{document}
\bibliographystyle{alpha}

\title 
{$\tau_I$-Elasticity for quotients of order four}
\author{Kailey B. Perry}
\date{}

\maketitle
\begin{abstract}
For a commutative domain $R$ with nonzero identity and $I$ an ideal of $R$, we say $a=\lambda b_1 \cdots b_k$ is a $\tau_I$-factorization of $a$ if $\lambda \in R$ is a unit and $b_i \equiv b_j$(mod $I$) for all $1\leq i \leq j \leq k$. 
These factorizations are nonunique, and two factorizations of the same element may have different lengths. In this paper, we determine the smallest quotient $R/I$ where $R$ is a unique factorization domain, $I\subset R$ an ideal, and $R$ contains an element with atomic $\tau_I$-factorizations of different lengths.
In fact, for $R=\mathbb{Z}[x]$ and $I = (2,x^2+x)$, we can find a sequence of elements $a_i$ that have an atomic $\tau_I$-factorization of length 2 and one of length $i$ for $i\in\mathbb{N}$.
\end{abstract}

\section{Introduction}

For a commutative domain $R$ with nonzero identity and $I$ an ideal of $R$, we say $a=\lambda b_1 \cdots b_k$ is a \emph{$\tau_I$-factorization} of $a$ if $\lambda \in R$ is a unit and $b_i \equiv b_j$(mod $I$) for all $1\leq i \leq j \leq k$. An element $a$ is a \emph{$\tau_I$-atom} if the only $\tau_I$-factorizations of $a$ are of length one. An element $a$ is \emph{$\tau_I$-atomic} if it has a $\tau_I$-factorization into $\tau_I$-atoms. A domain $R$ with an ideal $I$ is \emph{$\tau_I$-atomic} if every nonzero nonunit in $R$ is $\tau_I$-atomic.

For example, for $R = \mathbb{Z}$ and $I=(3)$,
$20=2\cdot 2 \cdot 5$ is a $\tau_{(3)}$-atomic factorization as well as the prime factorization, because all prime factors of 20 are congruent modulo 3. Notice that
$28=4\cdot 7$ is not a $\tau_{(3)}$-atomic factorization, even though $4 \equiv 7$(mod 3), because 4 is not a $\tau_{(3)}$-atom. Instead, $28 = -1\cdot 2\cdot 2\cdot (-7)$ is a $\tau_{(3)}$-atomic factorization, using the unit $\lambda = -1$.

The topic of $\tau$-factorization was introduced by Anderson and Frazier \cite{anderson_general_2011}, and has since been studied by Florescu \cite{florescu_reduced_2013}, Hamon \cite{Hamon2007SomeTI}, Juett \cite{juett_topics_2013}, Mahlum and Mooney \cite{mahlum_generalized_2016}, Mooney \cite{mooney_tau_2016}, Anderson and Ortiz-Albino \cite{anderson_three_2012}, and Anderson and Reinkoester \cite{anderson_generalized_nodate}, as well as others.

Originally defined by Anderson and Frazier \cite{anderson_general_2011} only for elements of $\tau_I$-atomic domains, the definition of $\tau_I$-elasticity was expanded by Hasenauer, Kubik and Perry \cite{Taustuff} to also apply to elements with a $\tau_I$-factorization into atoms in domains that are not $\tau_I$-atomic. We will be using the expanded definition. 

We will say that a UFD $R$ is a \emph{$\tau_I$-half factorial domain} if all $\tau_I$-factorizations of a given element into atoms have the same length. This is also using the expanded notion of elasticity, only considering atomic elements rather than requiring the entire domain be atomic.
For $m \in R$ a $\tau_I$-atomic element, we define the \emph{$\tau_I$-elasticity} of $m$ to be 
$$\rho_I(m)=\sup\{\frac{k}{l}: m=a_1\cdots a_k=b_1\cdots b_l\text{  both $\tau_I$-factorizations into atoms}\}$$

\noindent and the \emph{$\tau_I$-elasticity} of $R$ is defined 

$$
\rho_I(R)=\sup\{\rho_I(m): m\in R, m \text{ is $\tau_I$-atomic}\}.
$$

In this paper, we ask, given any unique factorization domain $R$ and an ideal $I\subset R$, what is the smallest quotient $R/I$ such that $R$ fails to be a $\tau_I$-half factorial domain, i.e. that $\rho_I(R)>1$.
We show a UFD $R$ with an ideal $I$ such that $|R/I|=4$ and $R$ has infinite $\tau_I$-elasticity.
Hasenauer and Kubik found that 
four is also the smallest order for a quotient $R/I$ where $R$ fails to be $\tau_I$-atomic \cite{hasenauer_tau-atomicity_2021}. Interestingly, it is a different quotient that turns out to have infinite elasticity. This is in part due to the fact that we consider $R$ any UFD, where Hasenauer and Kubik only took $R$ to be a principal ideal domain. When $R$ is only a UFD, none of the isomorphism classes of order 4 need be $\tau_I$-atomic.

Our result is as follows.

\begin{mainthm*}
Let R be a UFD and I an ideal of R with $|R/I| = 4$. The $\tau_I$-elasticity of R is infinite when $R/I \cong \bbz[x]/(2, x^2+x)$ and there is at least one prime in $R$ in each of the non-unit equivalence classes of the quotient, and the $\tau_I$-elasticity of $R$ is 1 otherwise.
\end{mainthm*}

Because there is only one isomorphism class of commutative rings with unit of each order 1, 2 and 3, and it was shown by Hasenauer, Kubik and Perry \cite{Taustuff} that $\rho_{(n)}(\bbz) = 1$ for $n= 1,2,3$, this theorem shows that four is the smallest quotient $R/I$ for which $R$ is not a $\tau_I$-half factorial domain, and is the smallest quotient for which $\rho_I(R) = \infty$.

%%%%%%%%%%%%%%%%%%%%%%%%%

\section{Proof of Main Theorem}

There are four commutative rings with identity of order four (see the problem and solution by Singmaster and Bloom for more \cite{10.2307/2312421}).
The four isomorphism classes are $\mathbb{Z}/4\mathbb{Z}$, $\mathbb{Z}[x]/(2,x^2+1)$, $\mathbb{Z}[x]/(2,x^2+x+1)$, and $\mathbb{Z}[x]/(2,x^2+x)$.
We will prove the Main Theorem using the following four lemmas, considering each isomorphism class individually.

Our first lemma is a generalization of a result from Hasenauer, Kubik and Perry\cite{Taustuff}. In that paper, it is shown that $\rho_{(4)}(\bbz) = 1$. Now, we show that $\rho_I(R) = 1$ for any UFD $R$ and ideal $I$ such that $R/I \cong \bbz/4\bbz$.

\begin{lem}
Let R be a UFD and I an ideal of R. If $R/I \cong \bbz/4\bbz$, then $\rho_I(a)=1$ for all $\tau_I$-atomic $a\in R$, and $R$ is a $\tau_I$-HFD.
\end{lem}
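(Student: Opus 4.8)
The plan is to reduce the whole statement to three associate-invariant counting functions and then pin down the length of \emph{every} $\tau_I$-atomic factorization of a fixed $a$ in terms of $a$ alone. Write $\overline{(\cdot)}\colon R\to R/I\cong\bbz/4\bbz$ for the quotient map; its units are $\{1,3\}$ and its nonunits are $\{0,2\}$, with $2^2=0$ and $3=-1$. For a prime factorization $a=u\pi_1\cdots\pi_r$ in the UFD $R$, set $\Omega(a)=r$, let $n_0(a)$ be the number of $\pi_j$ lying in $I$ (i.e.\ $\overline{\pi_j}=0$), and let $n_2(a)$ be the number with $\overline{\pi_j}=2$. These are well defined independently of the chosen associates: multiplying a prime by a unit (whose residue is $1$ or $3$) fixes the residues $0$ and $2$, since $1\cdot 0=3\cdot 0=0$ and $1\cdot 2=3\cdot 2=2$, and it never changes $\Omega$. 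From $\overline{a}=\overline{u}\,\overline{\pi_1}\cdots\overline{\pi_r}$ one reads off the dictionary $\overline{a}=0\iff n_0(a)\geq 1$ or $n_2(a)\geq 2$;\quad $\overline{a}=2\iff n_0(a)=0$ and $n_2(a)=1$;\quad $\overline{a}\in\{1,3\}\iff n_0(a)=n_2(a)=0$.

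Next I would establish the structural facts that identify the $\tau_I$-atoms in each class. First, any $a$ with $\overline{a}=2$ is automatically a $\tau_I$-atom: in a factorization $a=\lambda b_1\cdots b_k$ with all $b_i\equiv c$ one has $\overline{a}=\overline{\lambda}\,c^{k}$, and running over $c\in\{0,1,2,3\}$ and $\overline{\lambda}\in\{1,3\}$ shows that for $k\geq 2$ the value $\overline{\lambda}c^{k}$ lies in $\{0,1,3\}$, never $2$. Second, a $\tau_I$-atom $b$ with $\overline{b}=0$ has \emph{exactly one} prime factor in $I$: if it had none then $\overline{b}=0$ forces $n_2(b)\geq 2$, and placing one prime $\equiv 2$ in each of $n_2(b)$ factors (odd primes distributed arbitrarily) yields a factorization of length $n_2(b)\geq 2$ with all factors $\equiv 2$; and if it had two primes in $I$, placing one in each of $n_0(b)$ factors yields a factorization of length $n_0(b)\geq 2$ with all factors $\equiv 0$; either case contradicts atomicity. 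Third, a $\tau_I$-atom $b$ with $\overline{b}\in\{1,3\}$ is prime: every prime factor of such a $b$ has residue in $\{1,3\}$, and if $b$ is a product of $r\geq 2$ primes I split its primes into two nonempty groups, using that each prime factor $\pi$ satisfies $\{\overline{\pi},\overline{-\pi}\}=\{1,3\}$ together with a global unit $\lambda\in\{1,-1\}$ to arrange the two groups to be congruent mod $I$, again contradicting atomicity.

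With these in hand I would finish by cases on $\overline{a}$. If $\overline{a}\in\{1,3\}$, every factor of a $\tau_I$-atomic factorization is prime, so the length equals $\Omega(a)$ (such factorizations exist: replace each prime of $a$ by its associate lying in whichever of the classes $1,3$ one wants, absorbing the leftover unit into $\lambda$). If $\overline{a}=2$ then $a$ is itself the unique atom and the only factorization has length $1$. If $\overline{a}=0$ then all factors lie in class $0$ or all in class $2$; when $n_0(a)=0$ (so $n_2(a)\geq 2$) class $0$ is impossible and, since each class-$2$ atom carries exactly one prime $\equiv 2$, the length is $n_2(a)$; when $n_0(a)\geq 1$ class $2$ is impossible (it would force $n_0(a)=0$) and, since each class-$0$ atom carries exactly one prime in $I$, the length is $n_0(a)$. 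In every case the length of a $\tau_I$-atomic factorization is determined by $a$, so $\rho_I(a)=1$ for all $\tau_I$-atomic $a$ and $R$ is a $\tau_I$-HFD.

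I expect the main obstacle to be the odd classes $1$ and $3$: unlike the residues $0$ and $2$, whether a prime lies in class $1$ versus class $3$ is \emph{not} invariant under associates, so the naive ``count the primes $\equiv 3$'' invariant is ill defined. The fix is to track only the associate-invariant $\Omega(a)$ and to exploit the unit $-1\equiv 3\pmod I$ to shuttle any prime between the two odd classes; getting the splitting argument for composite odd atoms correct, and checking that one can always realize an entire factorization inside a single residue class, is the delicate point. The remaining bookkeeping—the dictionary above and the mutual exclusivity of the class-$0$ and class-$2$ cases—is routine.
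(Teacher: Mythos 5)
Your proof is correct and follows essentially the same route as the paper's: both arguments classify the $\tau_I$-atoms by residue class (class-$2$ elements are atoms, class-$0$ atoms contain exactly one prime in $I$, odd-class atoms are prime via the unit $-1$) and then show the factorization length is forced in each case. Your packaging via the associate-invariant counts $\Omega$, $n_0$, $n_2$ is a slightly cleaner bookkeeping device but not a different argument.
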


\begin{proof}
Let $\tilde{f}: \bbz/4\bbz \to R/I$ be an isomorphism and denote by $f(i)$ some element of $R$ in the $\tilde{f}(i)$ isomorphism class for each $i\in \bbz/4\bbz$.

Let $a\in R$ and $a=p_1 \cdots p_k q_1 \cdots q_l r_1 \cdots r_m s_1 \cdots s_n$ be the unique factorization of $a$ into primes, such that $p_i \equiv f(1)$ (mod $I$), $q_i \equiv f(2)$ (mod $I$), $r_i \equiv f(3)$ (mod $I$), and $s_i \equiv f(0)$ (mod $I$).  
Because the equivalence class of 2, $\bar{2} \in \bbz/4\bbz$ is a zero divisor, there are no units congruent to $f(2)$ modulo $I$, and there cannot be any units congruent to $f(0) \in I$. So any units of $R$ can only be congruent to $f(1)$ modulo $I$ or $f(3)$ modulo $I$. Notice then, that for any unit $\lambda \in R$, and any $q_i \equiv f(2)$ (mod $I$), $\lambda q_i \equiv f(2)$ (mod $I$), and for any unit $\lambda$ and any $p_i \equiv f(1)$ (mod $I$) or $r_i \equiv f(3)$ (mod $I$), $\lambda p_i \not\equiv f(2)$ (mod $I$) and $\lambda r_i \not\equiv f(2)$ (mod $I$). Further, because $1\equiv f(1)$ (mod $I$) and $-1\equiv f(3)$ (mod $I$), there is at least one unit in each of those equivalence classes.

Because there is no element in $\bbz/4\bbz$ that, when raised to a power greater than one, is equal to $\Bar{2}$, any product of primes in $R$ that is congruent to $f(2)$ modulo $I$ must be an atom. So for $a\in R$ congruent to $f(2)$ modulo $I$, $a$ is an atom, so the length of any $\tau_I$-factorization of $a$ is 1.

 Because any product $p_1 \cdots p_k q_1\cdots q_l r_1 \cdots r_m$ for $l \geq 2$ could have a $\tau_I$-factorization of $(q_1 p_1 \cdots p_k r_1 \cdots r_s)q_2\cdots q_l$, such a product cannot be an atom. So, an atom which contains no primes in $I$ cannot be congruent to 0 modulo $I$. 
 A product $p_1 \cdots p_k q_1\cdots q_l r_1 \cdots r_m s_1\cdots s_n$ for $n\geq 2$ can also be $\tau_I$-factored as $(p_1 \cdots p_k q_1\cdots q_l r_1 \cdots r_m s_1) s_2\cdots s_n$. 
 If instead $n=1$ and $l\geq 2$, it can be $\tau_I$-factored as $(p_1 \cdots p_k  r_1 \cdots r_m s_1)( q_1\cdots q_l)$ and is not an atom. 
 If $n=1$ and $l=1$, the product $p_1 \cdots p_k q_1 r_1 \cdots r_m s_1$ is an atom, because if it were to be factored, only the factor containing $s_1$ would be congruent to $0$ (mod $I$).

 Let $a\in R$ be congruent to 0 modulo $I$, with a prime factorization of $p_1 \cdots p_k q_1\cdots q_l r_1 \cdots r_m s_1\cdots s_n$. Since $0+I$ and $f(2)+I$ are the only zero divisors in $R/I$, a $\tau_I$-factorization must have atoms congruent to $f(2)$ modulo $I$ or $0$ modulo $I$. 
 If $n \geq 1$, then all atoms in a $\tau_I$-factorization of $a$ must be congruent to $0$ modulo $I$, and since an atom congruent to $0$ (mod $I$) must contain exactly one prime congruent to $0$ (mod $I$), the length of any $\tau_I$-factorization of $a$ is $n$.
 If $n=0$, because all atoms in a $\tau_I$-factorization of $a$ must be congruent to 2 modulo $I$, each atom must contain exactly one prime congruent to $f(2)$ modulo $I$, and the length of any $\tau_I$-factorization must be $l$.
 
 If a product of primes in $R$ is congruent to $f(1)$ modulo $I$ or $f(3)$ modulo $I$, then none of its factors are congruent to $f(2)$  or $0$ modulo $I$. Since $-1$ is a unit in $R$, for $r\equiv f(3)$(mod I), $-r$ is congruent to $f(1)$ modulo $I$. So, a product $p_1 \cdots p_k r_1 \cdots r_m$ can be factored to $(-1)^m p_1 \cdots p_k (-r_1)\cdots(-r_m)$, and cannot be an atom for $k+m \geq 2$. So for $a\in R$ with prime factorization $a=p_1 \cdots p_k r_1 \cdots r_m$, the $\tau_I$-factorization of $a$ can only have length $k+m$.
 
 Because the length of a $\tau_I$-factorization is unique for any $\tau_I$-atomic $a\in R$, the elasticity $\rho_I(a) =1$ for all $\tau_I$-atomic elements $a\in R$, and so $\rho_I(R)=1$.
\end{proof}

Notice that this proof used the fact that $\tau_I$-elasticity is only defined over $\tau_I$-atomic elements of $R$. We do not consider elements like $a = q_1q_2 s_1$, because these do not have a $\tau_I$-factorization into atoms.

The proof of the following lemma is very similar to the preceding one, as $x+1 \in \bbz[x]/(2, x^2+1)$ is the only nonzero zero-divisor, squares to 0, and for every other nonzero element $a \in \bbz[x]$, $(x+1)a \equiv x+1$ (mod $(2,x^2+1)$), and so behaves like $2\in \bbz/4\bbz$.

\begin{lem}
Let R be a UFD and I an ideal of R. If $R/I \cong \bbz[x]/(2, x^2+1)$, then  $ \rho_I(a)=1$ for all $\tau_I$-atomic $a\in R$ and so R is a $\tau_I$-HFD.
\end{lem}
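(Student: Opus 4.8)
The plan is to follow the template of the previous lemma, exploiting that $\bbz[x]/(2,x^2+1)$ has the same coarse multiplicative structure as $\bbz/4\bbz$. Writing $\tilde f$ for an isomorphism $\bbz[x]/(2,x^2+1)\to R/I$ and setting $T=\tilde f(x+1)$, the class $T$ is the unique nonzero zero divisor, it is nilpotent with $T^2=0$, and the two unit classes $U_1=\bar 1$ and $U_2=\tilde f(x)$ satisfy $U_2^2=U_1$ together with $TU_1=TU_2=T$. Thus $T$ plays exactly the role that $\bar 2$ did before. As in the previous lemma, I would fix a prime factorization of a $\tau_I$-atomic element $a$, sort the primes by their class modulo $I$, and argue that the length of any $\tau_I$-factorization of $a$ into atoms is forced by this data.

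The zero-divisor-class and zero-class cases transfer almost verbatim. If $a\equiv T\pmod I$, then since no class raised to a power exceeding one equals $T$ (here $U_2^k\in\{U_1,U_2\}$ and $T^k=0$ for $k\geq 2$), the element $a$ is an atom and every $\tau_I$-factorization has length one. If $a\equiv 0\pmod I$, then because $0+I$ and $T+I$ are the only zero divisors, every atom of a $\tau_I$-factorization of $a$ is congruent to $0$ or to $T$; an atom congruent to $0$ contains exactly one prime from $I$, while an atom congruent to $T$ contains exactly one prime congruent to $T$ and no prime from $I$. Hence, exactly as before, if $a$ has $n\geq 1$ primes in $I$ the common length is $n$, and if $n=0$ it is the number of primes congruent to $T$. (Elements that fail to be $\tau_I$-atomic, such as a single $I$-prime times two $T$-primes, are excluded, paralleling the remark following the previous lemma.)

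The genuinely new point is the unit-class case, where the earlier proof does not carry over: there $1$ and $-1$ occupy the two distinct unit classes, so $R$ automatically contains a unit in each and one converts the class-$f(3)$ primes to class $f(1)$. In our ring $-1\equiv 1\equiv U_1$, so both $\pm 1$ lie in $U_1$ and $R$ need not contain any unit congruent to $U_2$; indeed for $R=\bbz[x]$ with $I=(2,x^2+1)$ the only units $\pm 1$ are congruent to $U_1$. So I would split the analysis of $a=p_1\cdots p_k\,w_1\cdots w_m$, with $p_i\equiv U_1$, $w_j\equiv U_2$ and no zero-divisor primes, into two subcases. If some unit $\mu\equiv U_2$ exists, then units realize both unit classes, one shows every atom lying in a unit class is an associate of a prime, and so every $\tau_I$-factorization into atoms has length $k+m$. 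If no unit is congruent to $U_2$, I would instead classify the atoms directly: an atom congruent to $U_1$ is exactly a single prime $\equiv U_1$, while an atom congruent to $U_2$ is exactly a product with a single $U_2$-prime and any number of $U_1$-primes, since a product with two or more $U_2$-primes splits into congruent factors. Because the factors of a $\tau_I$-factorization all share one class, every atomic factorization of $a$ has all atoms $\equiv U_2$ when $m\geq 1$, forcing length exactly $m$, and all atoms $\equiv U_1$ when $m=0$, forcing length $k$.

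In every case the length of a $\tau_I$-factorization into atoms is determined by $a$, so $\rho_I(a)=1$ for all $\tau_I$-atomic $a$, hence $\rho_I(R)=1$ and $R$ is a $\tau_I$-HFD. I expect the main obstacle to be precisely the unit-class subcase in which no unit is congruent to $U_2$: the conversion trick available in the $\bbz/4\bbz$ argument is gone, and one must identify the atoms in each unit class by hand and verify that this still pins down a single factorization length.
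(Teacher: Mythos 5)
Your proposal is correct and follows essentially the same route as the paper's proof: identify $\tilde{f}(x+1)$ as the analogue of $\bar{2}\in\bbz/4\bbz$, transfer the zero-divisor and zero-class cases from the preceding lemma, and split the unit-class case according to whether $R$ contains a unit congruent to $\tilde{f}(x)$. The only difference is presentational, in that you classify the atoms in the no-unit subcase a bit more explicitly than the paper does.
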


\begin{proof}
A Cayley table showing multiplication in $\bbz[x]/(2, x^2+1)$ is provided.

\begin{center}
    \begin{tabular}{c|c c c}
         & $1$ & $x$ & $x+1$  \\
        \hline
        $1$       & $1$         & $x$       & $x+1$ \\
        $x$       & $x$         & $1$       & $x+1$  \\
        $x+1$     & $x+1$       & $x+1$     & $0$     \\
    \end{tabular}   
\end{center}

Let $\tilde{f}: \bbz[x]/(2, x^2+1) \to R/I$ be an isomorphism, and let $f(a)\in R$ be some element that the quotient map sends to $\tilde{f}(a)$ for each $a\in \bbz[x]/(2, x^2+1)$.
Let $a\in R$ and $a=p_1 \cdots p_k q_1 \cdots q_l r_1 \cdots r_m s_1 \cdots s_n$ be the unique factorization of $a$ into primes, such that $p_i \equiv f(1)$ (mod $I$), $q_i \equiv f(x)$(mod $I$), $r_i \equiv f(x+1)$(mod $I$), and $s_i \equiv f(0)$ (mod $I$). 
Since $f(x+1)+I \in R/I$ is a zero divisor, there cannot be any unit congruent to $f(x+1)$ modulo $I$ or congruent to $f(0)$ modulo $I$. Notice that for any unit in $f(1)+I$ or in $f(x)+I$, multiplication by a unit does not change the class of any prime in $f(x+1)+I$. 

Notice that $\tilde{f}(x+1) \in R/I$ here behaves like $\bar{2}\in \bbz/4\bbz$ in the previous lemma in that it squares to 0, is the only nonzero zero divisor, and absorbs other products. So, for the same reasons as in the previous lemma, if $a$ is congruent to $f(x+1)$ modulo $I$ or $f(0)$ modulo $I$, each atom in any $\tau_I$-factorization must contain exactly one prime congruent to $f(x+1)$ modulo I if it contains any and if it contains no primes congruent to $f(0)$ modulo $I$. The length of any $\tau_I$-factorization of $a$ then is $n$ if $n$ is nonzero and is $m$ is $n=0$ and $m\neq 0$.

With $n=m=0$, a product $p_1\cdots p_k q_1 \cdots q_l$ with $l\geq 2$ can have a $\tau_I$-factorization $(q_1 p_1\cdots p_k)q_2 \cdots q_l$, so such a product cannot be an atom. When there are no prime factors congruent to $f(x+1)$ modulo $I$, there can be at most one prime factor congruent to $f(x)$ modulo $I$ in an atom.

If there exists a unit $\lambda \in R$ congruent to $f(x)$ modulo $I$, a product

\noindent$p_1\cdots p_k q_1 \cdots q_l$ would factor to $\lambda^{-l} p_1 \cdots p_k (\lambda q_1)\cdots (\lambda q_l)$. For 

\noindent$a = p_1\cdots p_k q_1 \cdots q_l$, an atom couldn't contain more than one prime factor, and the length of any $\tau_I$-factorization would be $k+l$.

If there are no units in $R$ congruent to $f(x)$ modulo $I$, the only units are congruent to $f(1)$ modulo $I$ and so, multiplication by a unit cannot change the equivalence class of a prime. So, if $a=p_1\cdots p_k q_1 \cdots q_l$ for $l\geq 1$, each atom must contain exactly one prime congruent to $f(x)$ modulo $I$ and the length of any $\tau_I$-factorization must be $l$.

 Because the length of a $\tau_I$-factorization is unique for any $a\in R$, the elasticity $\rho_I(a) =1$ for all elements $a\in R$, and so $\rho_I(R)=1$.
\end{proof}

%%%%%%%%%%%%% 

\begin{lem}
Let R be a UFD and I an ideal of R. If $R/I \cong \mathbb{F}_4$ the field of order four, then  $\rho_I(w) = 1$ for all $\tau_I$-atomic $w\in R$ and R is a $\tau_I$-HFD.
\end{lem}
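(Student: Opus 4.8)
The plan is to follow the template of the two preceding lemmas. I would fix an isomorphism $\tilde f : \mathbb{F}_4 \to R/I$, pick representatives $f(1), f(x), f(x+1), f(0) \in R$, and sort the prime factors of a given $\tau_I$-atomic $a$ into the four classes, writing $p_i \equiv f(1)$, $q_i \equiv f(x)$, $r_i \equiv f(x+1)$, and $s_i \equiv f(0)$ (the primes lying in $I$), with $k, l, m, n$ of each. Because $\mathbb{F}_4$ is a field, its only zero-divisor class is $f(0)$, so a product of primes lies in $I$ if and only if at least one factor does. The genuinely new feature here is the unit group: the image of $R^\times$ in $(R/I)^\times \cong \bbz/3\bbz$ is a subgroup and hence is \emph{either} trivial \emph{or} all of $(R/I)^\times$, while $-1 \equiv 1 \pmod I$ since $R/I$ has characteristic two. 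I would split on these two possibilities after first handling the elements of $I$.

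For $a \in I$: in any $\tau_I$-factorization $a = \lambda b_1 \cdots b_t$ the common class $c$ of the $b_i$ satisfies $(\text{unit})\cdot c^t \equiv 0$, forcing $c \equiv 0$ since a field has no nonzero nilpotents, so every atom lies in $I$. Exactly as in the $\bbz/4\bbz$ lemma, an atom congruent to $f(0)$ contains exactly one prime of $I$ (two would split off a factor still congruent to $0$), so the number of atoms equals $n$ and the length is unique. This step is insensitive to the unit case. When $a \notin I$ and $R^\times$ surjects onto $(R/I)^\times$, any prime can be moved to any nonzero class by a unit, so every product of two or more primes outside $I$ factors into single primes of a common class; hence the atoms outside $I$ are precisely the unit multiples of single primes, and every factorization of $a$ has length equal to the total number $k+l+m$ of its prime factors.

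The hard case, and the main obstacle, is $a \notin I$ when $R^\times$ maps only into $f(1)+I$, so that multiplication by a unit preserves congruence classes and the class of a product is literally the product of the classes of its factors. Here I would first classify the atoms outside $I$, using $f(x)^2 = f(x+1)$ and $f(x)f(x+1) = f(1)$: up to units an atom is either a single prime, or one $f(x)$-prime (resp.\ one $f(x+1)$-prime) times any number of $f(1)$-primes, or a single $f(x)$-prime times a single $f(x+1)$-prime. A factorization into atoms of common class $f(1)$ then forces the $f(x)$- and $f(x+1)$-primes to pair off, so it exists only when $l=m$ and then has length $k+l$; a factorization of common class $f(x)$ exists only when $m=0$ and has length $l$; symmetrically the class $f(x+1)$ requires $l=0$ and gives length $m$.

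Finally I would compare these three conditions: at most one of them can hold once $a$ is $\tau_I$-atomic, and when $l \neq m$ with both positive none holds, so $a$ has no atomic factorization at all and is excluded from the statement. Consequently the length is pinned down in every atomic case and $\rho_I(a)=1$, whence $R$ is a $\tau_I$-HFD. I expect essentially all of the work, and the only delicate point, to lie in the atom classification of this last subcase and the bookkeeping that rules out two factorizations of different lengths.
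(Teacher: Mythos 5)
Your proposal is correct and follows the paper's overall template --- sort the prime factors of $a$ by congruence class and pin down the length of any factorization into atoms --- but it executes the decisive case by a genuinely different and somewhat cleaner route. The paper first quotes the Hasenauer--Kubik criterion for when $R$ is $\tau_I$-atomic, and in the remaining case (all units congruent to $f(1)$, primes in both nontrivial classes) argues by contradiction: pigeonhole produces an atom with more $f(x)$-primes than $f(x+1)$-primes, and a parity case analysis on the exponents $\alpha,\beta$ shows such a product must factor. You replace the citation with the observation that the image of $R^{\times}$ in $(R/I)^{\times}\cong\bbz/3\bbz$ is a subgroup, hence trivial or full, and you replace the contradiction with an explicit classification of the atoms outside $I$ followed by a comparison of the three possible common classes of a factorization. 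This buys two things. First, it closes a small gap in the paper's argument: a single $f(x)$-prime \emph{is} an atom with $\alpha=1>\beta=0$, so the claim that no atom can contain more $f(x)$- than $f(x+1)$-primes is not literally true; what actually rules it out is your common-class constraint (if one atom is congruent to $f(x)$, every atom is, and no atom congruent to $f(x)$ contains an $f(x+1)$-prime, contradicting $m\geq 1$). Second, it gets the length right: when $l=m\geq 1$ the $f(1)$-primes must appear as singleton atoms alongside the $f(x)f(x+1)$-pairs, so the length is $k+l$ rather than the $m=n$ asserted at the end of the paper's proof --- a slip with no effect on the conclusion $\rho_I(a)=1$, but one your bookkeeping corrects. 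The one debt you acknowledge, completeness of the atom classification (every product of two or more primes outside $I$ not on your list admits a factorization into congruent pieces), is exactly the finite check that the paper's parity cases perform, and it goes through.
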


\begin{proof}
A Cayley table showing multiplication in the field of order 4 is provided, as  $\mathbb{F}_4\cong \bbz[x]/(2, x^2+x+1)$.

\begin{center}
    \begin{tabular}{c|c c c}
         & $1$ & $x$ & $x+1$  \\
        \hline
        $1$       & $1$         & $x$       & $x+1$ \\
        $x$       & $x$         & $x+1$       & $1$  \\
        $x+1$     & $x+1$       & $1$     & $x$     \\
    \end{tabular}   
\end{center}

Let $\tilde{f}: \bbz[x]/(2, x^2+x+1)\to R/I$ be an isomorphism, and let $f(a)\in R$ be some element that the quotient map sends to $\tilde{f}(a)$ for each $a\in \bbz[x]/(2, x^2+x+1)$.
As shown by Hasenauer and Kubik \cite{hasenauer_tau-atomicity_2021}, $R$ is only $\tau_I$-atomic if there is a unit in every class or if R does not contain a prime in both $f(x)+I$ and $f(x+1)+I$. We begin by assuming $R$ is $\tau_I$-atomic.

If there is a unit in every class, then the prime factorization of any $w \in R$ will be a $\tau_I$-factorization with units where necessary, and $\rho_I(R) = 1$.

Otherwise, let $w$ be an element of $R$ with $w = p_1\cdots p_k q_1 \cdots q_l r_1 \cdots r_m s_1 \cdots s_n$ a prime factorization of $w$ with $p_i \equiv f(0)$ (mod $I$), $q_i \equiv f(1) $ (mod $I$), $r_i \equiv f(x)$ (mod $I$) and $s_i \equiv f(x+1)$ (mod $I$). 
In this case, we will assume $n=0$ because there are only primes in $x+I$. The case where there are only primes in $x+1+I$ is similar.

If $w\equiv f(0)$ (mod $I$), then $k\neq 0$. Any atom in a $\tau_I$-factorization of $w$ must contain at least one prime factor congruent to $f(0)$ (mod $I$), because there are no nonzero zero divisors in $R/I$. Any product $p_1\cdots p_k q_1 \cdots q_l r_1\cdots r_m$ with $k >1$ could be factored to $p_1\cdots p_{k-1}(p_kq_1 \cdots q_s r_1\cdots r_t)$, so an atom could contain no more than one factor of $p_i$. So, the length of any $\tau_I$-factorization of $w$ must be exactly $k$.

If $k=0$, $w=q_1 \cdots q_l r_1\cdots r_m$. Similarly to the previous lemma, because the only primes are congruent to $f(1)$ modulo $I$ or $f(x)$ modulo $I$, there must be exactly one prime congruent to $f(x)$ modulo $I$ in each atom. The length of any $\tau_I$-factorization of $w$, then, must be $m$ if nonzero, and $l$ if $m=0$.

In either case, if $R$ is $\tau_I$-atomic, then for any $w \in R$, $\rho_I(w) =1$.

By using the definition of elasticity used by Hasenauer, Kubik and Perry \cite{Taustuff}, we can also consider the elasticity of an element of $R$ that is atomic, even when $R$ is not $\tau_I$-atomic itself. As shown by Hasenauer and Kubik \cite{hasenauer_tau-atomicity_2021}, $R$ is not $\tau_I$-atomic when $R/I \cong \mathbb{F}_4$, $R/I$ has a prime in every class, and all units of $R$ are congruent to $f(1)$ modulo $I$. To use this definition of elasticity, we will have to know exactly which elements of $R$ are $\tau_I$-atomic. Elements congruent to $f(0)$ modulo $I$ are atomic and, as shown above, can only be factored with one factor of $p_i$ in each atom. If $m=0$ or $n=0$, then $w$ factors into atoms as shown above.

We will show that $w=q_1\cdots q_l r_1\cdots r_m s_1 \cdots s_n$ can only be $\tau_I$-atomic when $m=n$, by contradiction.
Let $m>n$ (the case when $n>m$ is similar). If $w$ were to have a $\tau_I$-factorization, then there would have to be at least one atom which contained more primes congruent to $f(x)$ modulo $I$ than those congruent to $f(x+1)$ modulo $I$. 
We will represent this atom $r_1 \cdots r_\alpha s_1 \cdots s_\beta$. If $\alpha$ were even, this product would factor as $(r_1 r_2)\cdots(r_{\alpha -1} r_\alpha)s_1 \cdots s_\beta$, and so wouldn't be an atom. 
If $\beta$ were even, similarly, the product would factor as $r_1 \cdots r_\alpha (s_1 s_2)\cdots (s_{\beta -1} s_\beta)$ and wouldn't be an atom. 
Now, if $\alpha$ and $\beta$ were both odd and $\alpha \geq 5$, then the product could factor as 

\noindent$(r_1 r_2 r_3 r_4 r_5)(r_6 r_7)\cdots(r_{\alpha -1} r_\alpha)s_1 \cdots s_\beta$, and could not be an atom. Finally, if $\alpha$ and $\beta$ were both odd and $5>\alpha >\beta$, then $\alpha=3$ and $\beta =1$. But $r_1 r_2 r_3 s_1$ can factor as $r_1(r_2 r_3 s_1)$, and so is not an atom.

So, for $w=q_1\cdots q_l r_1 \cdots r_m s_1 \cdots s_n$ to be $\tau_I$-atomic, it must be that $m=n$, and the atoms of the $\tau_I$-factorization can only contain one prime congruent to each of $f(x)$ modulo $I$ and $f(x+1)$ modulo $I$. Any $\tau_I$-factorization, then, has length $m=n$.

So, in any of these cases where $R/I \cong \mathbb{F}_4$, the elasticity of any atomic $w \in R$ is $\rho_I(w) =1$, so $\rho_I(R)=1$.

\end{proof}

So we have seen that for $R$ a UFD with $I$ an ideal and $|R/I|=4$, for three of the four isomorphism classes of the quotient, we have $\rho_I(R)=1$. The following lemma will finish the proof of the theorem by showing that for the fourth isomorphism class, in certain cases, we have $\rho_I(R) = \infty$.

\begin{lem}
Let R be a UFD and I an ideal of R. If $R/I \cong \bbz[x]/(2, x^2+x)$ and there is at least one prime in each of the $x+I$ and $x+1+I$ classes, then  $\rho_I(R)= \infty$.
Otherwise, $\rho_I(R) =1$.
\end{lem}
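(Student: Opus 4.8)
The plan is to begin by recording the structure of the quotient. Writing $\tilde f : \bbz[x]/(2,x^2+x) \to R/I$ for the isomorphism and $f(a)$ for a lift of $\tilde f(a)$, as in the preceding lemmas, I would compute the Cayley table and observe that $\bbz[x]/(2,x^2+x) \cong \mathbb{F}_2 \times \mathbb{F}_2$ via $x \mapsto (1,0)$ and $x+1 \mapsto (0,1)$. Thus the only nonzero zero divisors are the classes of $f(x)$ and $f(x+1)$, both idempotent, with $f(x)f(x+1) \equiv f(0)$; the only unit class is $f(1)+I$ (so $-1 \equiv f(1)$ and multiplication by a unit fixes every residue class); and $f(x)^j \equiv f(x)$, $f(x+1)^j \equiv f(x+1)$ for all $j \geq 1$. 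The feature driving the whole argument is that a product of primes is $\equiv f(0)$ exactly when it contains a prime of $I$, or contains at least one prime in $f(x)+I$ together with at least one in $f(x+1)+I$.

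The heart of the proof is a characterization of the relevant $\tau_I$-atoms. Fixing primes $r \equiv f(x)$ and $s \equiv f(x+1)$, I would show that a product $r^\alpha s^\beta$ (up to a unit and extra factors in $f(1)+I$) is $\equiv f(0)$, and that it is a $\tau_I$-atom if and only if $\min(\alpha,\beta)=1$: if both exponents are $\geq 2$ it splits as $(r^{\alpha-1}s^{\beta-1})(rs)$ with both factors $\equiv f(0)$, while if, say, $\alpha = 1$, any nontrivial $\tau_I$-factorization would force every factor into $f(0)+I$ and hence to contain a copy of $r$, which is impossible with only one $r$ present. The same mechanism identifies the atoms in the other residue classes (exactly one prime of $I$ in each atom when the class is $f(0)$ and no prime of $f(x+1)+I$ occurs, exactly one prime of $f(x)+I$ when the class is $f(x)$, and a single prime when the class is $f(1)$).

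For the infinite-elasticity case, assuming primes $r \equiv f(x)$ and $s \equiv f(x+1)$ both exist, I would exhibit for each $i \geq 2$ the element $a_i = r^i s^i$ together with the two $\tau_I$-factorizations into atoms
$$a_i = (rs)(rs)\cdots(rs) = (r s^{\,i-1})(r^{\,i-1} s).$$
By idempotency every displayed factor is $\equiv f(0)$, so the factors of each factorization are pairwise congruent, and by the atom characterization each of $rs$, $rs^{\,i-1}$, $r^{\,i-1}s$ is a $\tau_I$-atom; the first factorization has length $i$ and the second length $2$. Hence $\rho_I(a_i) \geq i/2$, and letting $i \to \infty$ gives $\rho_I(R) = \infty$. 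Using prime powers rather than distinct primes is precisely what makes this work over an arbitrary UFD, where a residue class may contain only one prime up to associates.

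For the remaining (``otherwise'') case I would assume, without loss of generality by the symmetry between $x$ and $x+1$, that $R$ has no prime in $f(x+1)+I$, so every prime lies in $f(0)+I$, $f(1)+I$, or $f(x)+I$. Writing a $\tau_I$-atomic $a$ with prime factorization $p_1\cdots p_k\, q_1 \cdots q_l\, r_1 \cdots r_m$ (classes $f(0), f(1), f(x)$ respectively), I would run the class-by-class analysis: since all atoms of a $\tau_I$-factorization are mutually congruent and their product has the class of $a$, all atoms lie in one forced class, and the atom characterization then forces each atom to contain exactly one distinguished prime, so every $\tau_I$-factorization into atoms has length $k$, $m$, or $l$ according as $a \equiv f(0)$, $a \equiv f(x)$ with $k=0$, or $a \equiv f(1)$. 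The length being an invariant of $a$, we conclude $\rho_I(a)=1$ for all $\tau_I$-atomic $a$, hence $\rho_I(R)=1$. The step I expect to be the main obstacle throughout is the atom characterization in the asymmetric case $\min(\alpha,\beta)=1$: one must verify both that such products cannot be split (the scarce class supplies a single prime that cannot be shared between two factors in $f(0)+I$) and that the two factorizations of $a_i$ genuinely consist of mutually congruent atoms; once that is secured, both the length counts and the estimate $\rho_I(a_i)\geq i/2$ are routine.
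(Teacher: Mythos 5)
Your proposal is correct and takes essentially the same approach as the paper: the same class-by-class analysis using idempotency to force all atoms of an element of $I$ into the class $f(0)+I$, the same characterization of atoms as products $r^{\alpha}s^{\beta}$ with $\min(\alpha,\beta)=1$, and the same pair of factorizations of lengths $2$ and $i$ (the paper works with general products $q_1\cdots q_l r_1\cdots r_m$ and gets $\rho_I(a)=\min(l,m)/2$, but it closes by noting the sequence $x^i(x+1)^i$ in $\bbz[x]$, which is exactly your $a_i=r^is^i$). No gaps.
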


\begin{proof}
A Cayley table showing multiplication in $\bbz[x]/(2, x^2+x)$ is provided.

\begin{center}
    \begin{tabular}{c|c c c}
         & $1$ & $x$ & $x+1$  \\
        \hline
        $1$       & $1$         & $x$       & $x+1$ \\
        $x$       & $x$         & $x$       & $0$  \\
        $x+1$     & $x+1$       & $0$     & $x+1$     \\
    \end{tabular}   
\end{center}

Let $f: \bbz[x]/(2, x^2+x)\to R/I$ be an isomorphism, and let $f(a)\in R$ be some element that the quotient map sends to $\tilde{f}(a)$ for each $a\in \bbz[x]/(2, x^2+x)$.

Let $a\in R$ and $a=p_1 \cdots p_k q_1 \cdots q_l r_1 \cdots r_m s_1\cdots s_n$ be the unique factorization of $a$ into primes, such that $p_i \equiv f(1)$ (mod $I$), $q_i \equiv f(x)$ (mod $I$), $r_i \equiv f(x+1)$ (mod $I$), and $s_i \equiv f(0)$ (mod $I$). Since $f(x+1)+I$ and $f(x)+I$ are zero divisors in $R/I$, all units of R must be congruent to $f(1)$ modulo $I$.

In the case that $a$ is not congruent to $f(0)$ modulo $I$, then $n=0$ and either $m=0$ or $l=0$. We will assume $n=m=0$ and $l>0$, as the case where all three are zero is obvious. The case where $n=l=0$ and $m>0$ is similar.
Because $l>0$ and the only prime factors of $a$ are congruent to $f(x)$ modulo $I$ or $f(1)$ modulo $I$, every atom must contain exactly one term congruent to $f(x)$ modulo $I$. So, the length of any $\tau_I$-factorization is exactly $l$.

Now suppose $a \in I$. Then $n>0$ or both $l>0$ and $m>0$. Because the only units are congruent to $f(1)$ modulo $I$ and no elements of $R/I$ are nilpotent, every atom must be congruent to $f(0)$ modulo $I$.
So every atom must contain either one prime congruent to $f(0)$ modulo $I$ or at least one prime congruent to $f(x)$ modulo $I$ and at least one congruent to $f(x+1)$ modulo $I$. 
Any product $q_1 \cdots q_\alpha r_1 \cdots r_\beta$ with $\alpha >1$ and $\beta >1$ could factor to $(q_1 r_1)(q_2\cdots q_\alpha r_1\cdots r_\beta)$, and so couldn't be an atom. So, each atom must have exactly one prime congruent to $f(0)$ modulo $I$, exactly one prime congruent to $f(x)$ modulo $I$ or exactly one prime congruent to $f(x+1)$ modulo $I$. 
Notice that any product $q_1 r_1 \cdots r_\beta$ or $r_1 q_1\cdots q_\alpha$ must be an atom, as there is no way to create two or more factors that are both congruent to 0 modulo $I$, for any value of $\alpha$ and $\beta$ that is greater than or equal to one.

So, for any $a= p_1\cdots p_k q_1\cdots q_l r_1\cdots r_m \in I$ with no prime factors congruent to $f(0)$ (mod $I$), $a$ has a shortest $\tau_I$-factorization of length 2, with one atom containing one prime congruent to $f(x)$ modulo $I$ and $m-1$ primes congruent to $f(x+1)$ modulo $I$, and the other atom containing $l-1$ primes congruent to $f(x)$ modulo $I$ and one prime congruent to $f(x+1)$ modulo $I$. The inclusion of primes congruent to $f(1)$ (mod $I$) does not affect whether these are atoms. Also, $a$ has a longest $\tau_I$-factorization with length $\min(l,m)$, with each atom containing exactly one prime congruent to either $f(x)$ modulo $I$ or $f(x+1)$ modulo $I$, whichever is appropriate.

So, for such elements $a \in I$, $\rho_I(a)=\frac{\min(l,m)}{2}$. Because the length of the prime factorizations for such elements $a$ is unbounded, $\min(l,m)$ is unbounded as well, and $\rho_I(R)=\infty$.

\end{proof}

This concludes the proof of the theorem. Notice that $\bbz[x]$ with the ideal $(2, x^2+x)$ itself is such a ring $R$, as both $x,x+1$ are prime in $\bbz[x]$. Here, such elements $a\in \bbz[x]$ might be $x^l(x+1)^m$. So we have a sequence $\{a_i=x^i(x+1)^i\}$ for which the values $\rho_I(a_i)$ are unbounded.

Similar questions could be explored for quotients of larger order. It was found by Hasenauer, Kubik and Perry \cite{Taustuff} that $\rho_{(n)}(\mathbb{Z})$ is only finite and not equal to 1 in the cases where $n=12$ and $n=18$, in which cases  $\rho_{(n)}(\mathbb{Z}) = 2$. An interesting project could be finding the smallest quotient with finite elasticity not equal to 1. Another could be finding a ring and ideal such that $\rho_I(R)$ is not an integer, or even irrational.

\bibliography{bib}

\begin{thebibliography}{AOA12}

\bibitem[AF11]{anderson_general_2011}
D.D. Anderson and Andrea~M. Frazier.
\newblock On a general theory of factorization in integral domains.
\newblock {\em Rocky Mountain Journal of Mathematics}, 41(3), June 2011.

\bibitem[AOA12]{anderson_three_2012}
D.~D. Anderson and R.~M. Ortiz-Albino.
\newblock Three frameworks for a general theory of factorization.
\newblock {\em Arabian Journal of Mathematics}, 1(1):1--16, April 2012.

\bibitem[AR11]{anderson_generalized_nodate}
D~D Anderson and J~Reinkoester.
\newblock Generalized relative primeness in integral domains.
\newblock {\em Rev. Roumaine Math. Pures Appl.}, 56:85--103, 2011.

\bibitem[Flo13]{florescu_reduced_2013}
Alina~Anca Florescu.
\newblock {\em Reduced tau(n)-factorizations in {Z} and tau(n)-factorizations
  in {N}}.
\newblock Doctor of {Philosophy}, University of Iowa, July 2013.

\bibitem[Ham07]{Hamon2007SomeTI}
Suzanne~Marie Hamon.
\newblock {\em Some topics in tau-factorizations}.
\newblock ProQuest LLC, Ann Arbor, MI, 2007.
\newblock Thesis (Ph.D.)--The University of Iowa.

\bibitem[HK21]{hasenauer_tau-atomicity_2021}
Richard~Erwin Hasenauer and Bethany Kubik.
\newblock {$\tau$}-{Atomicity} and {Quotients} of {Size} {Four}.
\newblock {\em Tamkang Journal of Mathematics}, 52, February 2021.

\bibitem[HKP]{Taustuff}
Richard~Erwin Hasenauer, Bethany Kubik, and Kailey~B. Perry.
\newblock $\tau$-elasticity in the integers.
\newblock in preparation.

\bibitem[Jue13]{juett_topics_2013}
Jason~Robert Juett.
\newblock {\em Some topics in abstract factorization}.
\newblock Doctor of {Philosophy}, University of Iowa, May 2013.

\bibitem[MM16]{mahlum_generalized_2016}
Austin Mahlum and Christopher~Park Mooney.
\newblock Generalized factorization in {$\mathbb{Z}/{m\mathbb{Z}}$}.
\newblock {\em Involve, a Journal of Mathematics}, 9(3):379--393, June 2016.

\bibitem[Moo16]{mooney_tau_2016}
Christopher~Park Mooney.
\newblock {$\tau $}-{Regular} factorization in commutative rings with
  zero-divisors.
\newblock {\em Rocky Mountain Journal of Mathematics}, 46(4), August 2016.

\bibitem[SB64]{10.2307/2312421}
David Singmaster and D.~M. Bloom.
\newblock E1648.
\newblock {\em The American Mathematical Monthly}, 71(8):918--920, 1964.

\end{thebibliography}

\end{document}